\def\marginpar#1{\ignorespaces}
\newtheorem{theorem}{Theorem}[section]
\newtheorem{lemma}[theorem]{Lemma}
\newtheorem{proposition}[theorem]{Proposition}
\newtheorem{definition}[theorem]{Definition}
\newtheorem{example}[theorem]{Example}
\newtheorem{fact}[theorem]{Fact}
\newtheorem{remark}[theorem]{Remark}
\numberwithin{equation}{section}
\newcommand{\eps}{\varepsilon}
\renewcommand{\Re}{\operatorname{Re}}
\renewcommand{\Im}{\operatorname{Im}}
\renewcommand{\leq}{\leqslant}
\renewcommand{\geq}{\geqslant}
\newcommand{\Zz}{\mathbf{Z}}
\newcommand{\Rr}{\mathbf{R}}
\newcommand{\expect}{\text{\boldmath$E$}}
\newcommand{\ra}{\rightarrow}
\numberwithin{equation}{section}
\begin{document}

\title{A characterization of limiting functions arising in Mod-* convergence} 

\authors{%
 Emmanuel Kowalski\footnote{ETH Zurich, D-MATH, R\"amistrasse 101,
  8092 Z\"URICH, Switzerland.
    \email{kowalski@math.ethz.ch}}
  \and 
  Joseph Najnudel\footnote{Institut de Math\'ematiques de Toulouse, Universit\'e Paul Sabatier, 118 route de Narbonne, 
31062 TOULOUSE, France.
    \email{joseph.najnudel@math.univ-toulouse.fr}}
    \and
  Ashkan Nikeghbali\footnote{Institut f\"ur Mathematik, Universit\"at Z\"urich, Winterthurerstrasse 190,
8057-Z\"urich, Switzerland.
    \email{ashkan.nikeghbali@math.uzh.ch}}
    }

  \date{}
\keywords{Mod-* convergence, Fourier transform, limiting functions, distributions, Dedekind sums, modular 
geodesics} 

\begin{abstract} In this note, we characterize the limiting functions in mod-Gausssian convergence; our approach sheds a new light on the nature of mod-Gaussian convergence as well. Our results in fact more generally apply to  mod-* convergence, where * stands for any family of probability distributions whose Fourier transforms do not vanish. We moreover provide new examples, including two new examples of (restricted) mod-Cauchy convergence
from arithmetics related to Dedekind sums and the linking number of modular geodesics.
\end{abstract}


\maketitle
\section{Introduction}
In \cite{JKN} a new type of convergence  which can be viewed as a refinement of the central limit theorem was proposed, following the idea that, given a sequence of random variables, one looks  for the convergence of the renormalized  sequence of characteristic functions  rather than the convergence of the renormalized sequence of the given random variables. More precisely the following definitions were introduced:
\begin{definition}[\cite{JKN}]Let $(\Omega,\mathcal F,\mathbb P)$ be a probability space and let $(X_n)_{n\geq0}$ be a sequence of random variables defined on this probability space.
\begin{enumerate}
\item We say that $(X_n)_{n\geq0}$ converges in the mod-Gaussian sense with parameters $(m_n,\sigma_n^2)$ and limiting function $\Phi(\lambda)$ if the following convergence holds locally uniformly for $\lambda$:
\begin{equation}\label{defmg}
\lim_{N\to\infty} \exp\left(-i m_N \lambda+\frac{\sigma_N^2\lambda^2}{2}\right)\mathbb E\left[\exp\left(i\lambda X_N\right)\right]=\Phi(\lambda),
\end{equation}(we have normalized by the characteristic function of  Gaussian random variables with mean $m_n$ and variance $\sigma_n^2$).
\item We say that the sequence $(X_n)_{n\geq0}$ converges in the mod-Poisson sense with parameter $\gamma_N$ and limiting function $\Phi$ if the following convergence holds locally uniformly for $\lambda$:
\begin{equation}\label{defmp}
\lim_{N\to\infty} \exp\left( -\gamma_N\left(e^{i\lambda}-1\right)\right)\mathbb E\left[\exp\left(i\lambda X_N\right)\right]=\Phi(\lambda),
\end{equation}(we have normalized by the characteristic function of  Poisson random variables with mean $\gamma_n$).

\end{enumerate}
\end{definition}
In fact, as pointed out in \cite{JKN}, one can more generally study the convergence of the characteristic functions after renormalization with any family of characteristic functions which do not vanish: with this more general situation in mind, we talk about \textit{mod-*} convergence. In a series of works \cite{JKN, KN, KN2, BKN,DKN} the authors establish that mod-* convergence occurs in many situations in number theory, random matrix theory, probability theory, random permutations and combinatorics and prove that under some extra assumptions, mod-* convergence may imply results such as local limit theorems, distributional approximations or precise large deviations. It should be noted that mod-* convergence usually implies convergence in law of the random variables $X_N$, possibly after  rescaling, which corresponds to most interesting studied cases where $m_n=0,\sigma_N\to\infty$ in (\ref{defmg}) or $\gamma_n\to\infty$ in (\ref{defmp}). Moreover it is shown in \cite{JKN} and \cite{KN2} that the limiting 
function sheds some new light into the connections between number theoretic objects and their naive probabilistic models. Roughly speaking, naive probabilistic models are based on the wrong assumptions that primes behave independently of each other but yet they can predict central limit theorems, such as Selberg's central limit theorem for the Riemann zeta function or the Erdos-Kac central limit theorem for the total number of distinct prime divisors of integers. However at the level of mod-Gaussian or mod-Poisson convergence, they fail to predict the correct behavior and a correction factor  appears in the limiting function to account for the lack of independence. Hence the limiting function seems to carry some information about the dependence among prime numbers. It thus seems natural to ask what  the possible limiting functions can be in the framework of mod-* convergence and this question was left open in \cite{JKN}. 

In this paper, we propose a characterization of the limiting functions. Let $S_0$ be the set of functions which can be obtained as the characteristic function of a real random variable, divided by 
the characteristic function of a gaussian random variable. It is clear that $S_0$ is contained in the set $S$ of the continuous functions $\phi$ from $\mathbb{R}$ 
to $\mathbb{C}$ such that $\phi(0) = 1$ and  $\phi(-\lambda) = \overline{\phi(\lambda)}$ for all $\lambda \in \mathbb{R}$. The converse is not true: it 
is clear that if a function $\phi$ in $S$ tends to infinity faster than $\lambda \mapsto e^{\sigma^2 \lambda^2/2}$ when $|\lambda|$ goes to infinity, for all 
$\sigma > 0$, then $\phi \notin S_0$. However,  the following result holds:

\begin{theorem} \label{p1}
The set $S_0$ is dense in $S$ for the topology of the uniform convergence on compact sets. 
\end{theorem}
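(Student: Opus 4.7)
My plan is to argue by contradiction via the geometric Hahn--Banach theorem in the locally convex Fr\'echet space $E=C(\mathbb{R},\mathbb{C})$ equipped with the topology of uniform convergence on compact sets. First, I would verify that $S_0$ is convex: given two elements $\psi_i(\lambda)e^{\sigma_i^2\lambda^2/2}\in S_0$ with, say, $\sigma_1\leq\sigma_2$, one can rewrite the first as $\bigl[\psi_1(\lambda)e^{-(\sigma_2^2-\sigma_1^2)\lambda^2/2}\bigr]\,e^{\sigma_2^2\lambda^2/2}$, in which the bracket is itself a characteristic function (namely of $X_1+Z$ for $Z\sim N(0,\sigma_2^2-\sigma_1^2)$ independent of $X_1$); hence any convex combination has the form $\tilde\psi(\lambda)\,e^{\sigma_2^2\lambda^2/2}\in S_0$ for some characteristic function $\tilde\psi$. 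Consequently $\overline{S_0}$ is closed and convex. Assuming for contradiction that $\overline{S_0}\neq S$, I pick $\phi_0\in S\setminus\overline{S_0}$; Hahn--Banach then furnishes a continuous $\mathbb{R}$-linear functional $L\colon E\to\mathbb{R}$ of the form $L(f)=\operatorname{Re}\int_{\mathbb{R}} f(\lambda)\,d\mu(\lambda)$ (for a compactly supported complex Radon measure $\mu$ on $\mathbb{R}$) together with $\alpha\in\mathbb{R}$ such that $L(\psi)\leq\alpha<L(\phi_0)$ for every $\psi\in\overline{S_0}$.

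The key move is to test this inequality on the two-parameter family $\lambda\mapsto e^{i\lambda x+\sigma^2\lambda^2/2}$, indexed by $(x,\sigma)\in\mathbb{R}\times[0,\infty)$: each such function lies in $S_0$, being the characteristic function of the Dirac mass $\delta_x$ divided by that of $N(0,\sigma^2)$. Writing $F(\sigma,x)=\int_{\mathbb{R}}e^{i\lambda x+\sigma^2\lambda^2/2}\,d\mu(\lambda)$, one obtains
\[
\operatorname{Re} F(\sigma,x)\leq\alpha\qquad\text{for all }(x,\sigma)\in\mathbb{R}\times[0,\infty).
\]

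I expect the main obstacle to be deducing from this uniform bound that $\mu$ must be supported at the origin. Put $B^{*}=\sup\{|\lambda|:\lambda\in\operatorname{supp}(\mu)\}$; if $B^{*}>0$, then as $\sigma\to\infty$ the weight $e^{\sigma^2\lambda^2/2}$ concentrates exponentially at $\lambda=\pm B^{*}$, and a Laplace-type analysis localizing $F(\sigma,x)$ to a small neighborhood of $\pm B^{*}$---chosen of length smaller than $1/|x|$ so the phase $e^{i\lambda x}$ is nearly constant there---should yield a lower bound of the form $|F(\sigma,x)|\gtrsim c(\sigma)\,e^{\sigma^2(B^{*})^{2}/2}$ with $c(\sigma)>0$ decaying at most polynomially. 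The delicate subpoint is the case when $\mu$ has no atoms at $\pm B^{*}$, handled by using only that every open neighborhood of $\pm B^{*}$ has positive $|\mu|$-mass, by definition of the support. Choosing $x$ to align the dominant phase $e^{iB^{*}x}$ with the amplitude then gives $\sup_{x}\operatorname{Re} F(\sigma,x)\to\infty$ as $\sigma\to\infty$, contradicting the bound by $\alpha$. Thus $B^{*}=0$ and $\mu=c\delta_{0}$ for some $c\in\mathbb{C}$; but then $L(f)=\operatorname{Re}(c\cdot f(0))=\operatorname{Re}(c)$ is constant on the affine set $S$, contradicting the strict inequality $L(\phi_{0})>\alpha\geq L(\psi)$ for $\psi\in S_{0}\subset S$. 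This final contradiction proves $\overline{S_{0}}=S$.
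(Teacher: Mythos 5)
Your overall strategy---Hahn--Banach separation in $C(\mathbb{R},\mathbb{C})$ with the compact-open topology, followed by testing the separating functional on the two-parameter family $e^{i\lambda x+\sigma^2\lambda^2/2}$---is genuinely different from the paper's proof, which is constructive: the authors exhibit explicit probability densities $g_{P,\sigma}$ whose Fourier transforms, after division by a Gaussian characteristic function, converge to $P(-i\lambda)$ for every real polynomial $P$ with $P(0)=1$, and then conclude by Stone--Weierstrass. Your preliminary steps (convexity of $S_0$, identification of the dual with compactly supported complex measures) are correct. The problem is that the one substantive step of your argument contains a genuine gap, and the intermediate claim it is meant to establish is in fact false as stated.

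You claim that $\operatorname{Re}F(\sigma,x)\leq\alpha$ for all $(x,\sigma)$ forces $\mu=c\delta_0$. Take $\mu=i(\delta_1+\delta_{-1})$: then $F(\sigma,x)=2ie^{\sigma^2/2}\cos x$ has identically vanishing real part, yet $\operatorname{supp}(\mu)=\{-1,1\}$. The failure mode is exactly the interaction between the two endpoints $\pm B^{*}$ that your localization does not address: the choice of $x$ that aligns the phase of the contribution near $+B^{*}$ with the positive real axis can simultaneously align the contribution near $-B^{*}$ with the negative real axis, producing exact cancellation in the real part. (In this example $L$ happens to vanish identically on $S$, so the final contradiction could still be reached---but only after first replacing $\mu$ by its Hermitian part $\rho=\frac12(\mu+\tilde\mu)$ with $\tilde\mu(A)=\overline{\mu(-A)}$, which is all that $L$ restricted to $S$ depends on; your argument never makes this reduction.) Even after symmetrizing, the remaining statement---that a Hermitian compactly supported measure $\rho$ with $\operatorname{Re}F$ bounded above uniformly in $\sigma$ must be a multiple of $\delta_0$---is precisely the point you flag as ``delicate'' without proving it: for a non-atomic $\rho$ whose density oscillates rapidly near the edge of its support (e.g.\ $d\rho=\sin(e^{1/(B^{*}-\lambda)})\,d\lambda$ near $\lambda=B^{*}$), the mass in the Laplace window suffers massive internal cancellation and the asserted lower bound $|F(\sigma,x)|\gtrsim c(\sigma)e^{\sigma^2(B^{*})^2/2}$ with $c(\sigma)$ decaying at most polynomially is not justified. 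A Wiener-type average of $\alpha-\operatorname{Re}F(\sigma,\cdot)$ against $e^{-iax}$ does kill atoms of $\rho$ away from the origin, but gives nothing for continuous parts. Note finally that, by duality, the missing statement is essentially equivalent to the theorem itself, so the Hahn--Banach reduction has not actually reduced the difficulty; the paper's explicit construction avoids all of these issues.
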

The next section is devoted to a complete and short proof of this result and on another possible proof based on the study of mod-Gaussian convergence for sums of i.i.d. random variables. We also propose the larger framework where mod-* convergence only holds on a finite interval. We moreover provide two new examples of mod-Cauchy convergence  from arithmetics related to Dedekind sums and the linking number of modular geodesics, thus strengthening the relevance of this framework in number theory as well.

\section{Proofs of Theorem \ref{p1}}
\subsection{Analytic proof}

Let $P$ be a polynomial with real coefficients, such that $P(0) = 1$. For all $\sigma > 0$, let 
us define the function $f_{\sigma}$ from $\mathbb{R}$ to $\mathbb{R}$ by 
$$f_{\sigma} (x) = \frac{1}{\sigma \sqrt{2 \pi}} \, e^{-x^2/2\sigma^2},$$ 
and the function $g_{P,\sigma}$ from $\mathbb{R}$ to $\mathbb{R}$ by
$$g_{P,\sigma}(x) = \frac{\sigma}{\sigma +1} \left( P(D)(f_{\sigma}) (x) + \frac{1}{2\sigma^2 \sqrt{2 \pi}}  e^{-x^2/8 \sigma^2} \right),$$
where $D$ denotes the operator of differentiation of functions (e.g., for $P(x) = x^2 + 1$, $P(D) (f_{\sigma}) = f''_{\sigma} + f_{\sigma})$. 
We first establish a lemma:
\begin{lemma}
For any real polynomial $P$ with constant term $1$, there exists $\sigma_0 > 0$ such that for all $\sigma \geq \sigma_0$, 
 $g_{P,\sigma}(x)$ is a nonnegative function. 
\end{lemma}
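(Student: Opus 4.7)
The plan is to expand $P(D)(f_\sigma)$ explicitly using Hermite polynomials, reducing the nonnegativity question to a single scalar inequality after the substitution $u = x/\sigma$, and then exploit the different Gaussian decay rates $e^{-x^2/2\sigma^2}$ and $e^{-x^2/8\sigma^2}$ by a two-region argument.

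Writing $f_\sigma(x) = \sigma^{-1}\varphi(x/\sigma)$ with $\varphi(u) = (2\pi)^{-1/2}e^{-u^2/2}$, the identity $\varphi^{(k)}(u) = (-1)^k H_k(u)\varphi(u)$ for the probabilist's Hermite polynomials gives $f_\sigma^{(k)}(x) = (-\sigma)^{-k} H_k(x/\sigma) f_\sigma(x)$ (up to the sign $(-1)^k$). Writing $P(X) = 1 + a_1 X + \dots + a_d X^d$, I would then get
$$P(D)(f_\sigma)(x) = f_\sigma(x)\bigl(1 + R_\sigma(x/\sigma)\bigr), \qquad R_\sigma(u) = \sum_{k=1}^d \frac{a_k(-1)^k H_k(u)}{\sigma^k}.$$
Substituting $u = x/\sigma$, pulling out the positive constant $\sigma/((\sigma+1)\sigma\sqrt{2\pi})$, and dividing through by $e^{-u^2/8}>0$, the nonnegativity of $g_{P,\sigma}(x)$ becomes equivalent to
$$F_\sigma(u) := e^{-3u^2/8}\bigl(1 + R_\sigma(u)\bigr) + \frac{1}{2\sigma} \geq 0 \quad \text{for all } u \in \mathbb{R}.$$
For $\sigma \geq 1$, the crude bound $1/\sigma^k \leq 1/\sigma$ yields $|e^{-3u^2/8} R_\sigma(u)| \leq T(u)/\sigma$, where $T(u) := e^{-3u^2/8}\sum_{k=1}^d |a_k||H_k(u)|$ is continuous, globally bounded, and tends to $0$ as $|u| \to \infty$.

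I expect the main obstacle to be the interplay between small and large $|u|$: for large $|u|$ the leading term $e^{-3u^2/8}$ becomes negligible, so the constant correction $1/(2\sigma)$ must absorb the error, while for small $|u|$ the error $T(u)/\sigma$ is not small in its own right, but is dominated by the leading term $e^{-3u^2/8}$ which is bounded below there. The resolution is a simple two-region split: choose $A>0$ large enough that $T(u) \leq 1/4$ for $|u| \geq A$, which is possible since $T$ is a polynomial times a Gaussian; for $|u| \geq A$ one then has
$$F_\sigma(u) \geq 0 - \frac{1}{4\sigma} + \frac{1}{2\sigma} = \frac{1}{4\sigma} > 0,$$
using only $e^{-3u^2/8} \geq 0$. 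For $|u| < A$, set $M := \sup_{\mathbb{R}} T < \infty$; then $F_\sigma(u) \geq e^{-3A^2/8} - M/\sigma$, which is nonnegative as soon as $\sigma \geq M e^{3A^2/8}$. Taking $\sigma_0 := \max\bigl(1,\, M e^{3A^2/8}\bigr)$ then proves the lemma.
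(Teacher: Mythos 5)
Your proof is correct and follows essentially the same strategy as the paper's: express $P(D)(f_\sigma)$ through Hermite polynomials so the error is an explicit $O(1/\sigma)$ perturbation of $f_\sigma$, then split into an inner region where the Gaussian main term dominates and an outer region where the wider Gaussian $e^{-x^2/8\sigma^2}$ absorbs everything. The only difference is cosmetic -- you rescale to $u=x/\sigma$ and split at a fixed threshold $|u|=A$, whereas the paper splits at $|x|=\sigma^{3/2}$; your normalization makes the error bound uniform and is, if anything, slightly cleaner.
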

\begin{proof}
Without loss of generality, we can assume that $\operatorname{deg}(P) \geq 1$, i.e. $P \neq 1$ (for $P=1$ the result is trivial). Now
 $f_{\sigma} (x) = \frac{f_1(x/\sigma)}{\sigma}$ and then, by taking the $k$-th derivative, 
$$f_{\sigma}^{(k)} (x) = \frac{f_1^{(k)} (x/\sigma)}{\sigma^{k+1}}$$
for all $\sigma> 0$, $x \in \mathbb{R}$, $k \geq 0$. From the expression of the derivatives of $f_1$ in terms of Hermite polynomials, one 
deduces that there exists a constant $C_P> 1$, depending only on the polynomial $P$,  such that 
\begin{equation}
|P(D) (f_{\sigma}) (x) - f_{\sigma}(x)| \leq C_P \, \left( \frac{1}{\sigma} +  \frac{|x|}{\sigma^2} + \left( \frac{|x|}{\sigma^2} \right)^{\operatorname{deg} (P)} \right)  f_{\sigma} (x)
\label{ndn}
\end{equation}
for all $\sigma> 1$, $x \in \mathbb{R}$ (recall that $P-1$ has no constant term). Let us first suppose that $\sigma >1$ and $|x| \leq \sigma^{3/2}$. 
In this case, $|x|/\sigma^2 \leq 1/\sqrt {\sigma}$, and then, from \eqref{ndn}:
$$|P(D) (f_{\sigma}) (x) - f_{\sigma}(x)|  \leq \frac{3 C_P}{\sqrt{\sigma}} \, f_{\sigma}(x),$$
which implies that $P(D) (f_{\sigma}) (x) \geq 0$, and a fortiori $g_{P,\sigma}(x) \geq 0$, for $\sigma \geq 9 C_P^2$. 
Let us now suppose that $|x| > \sigma^{3/2}$. In this case, for $\sigma > 3$, 
\begin{align*}
|P(D) (f_{\sigma}) (x)| &  \leq C_P \, \left( 1+ \frac{1}{\sigma} +  \frac{|x|}{\sigma^2} + \left( \frac{|x|}{\sigma^2} \right)^{\operatorname{deg} (P)} \right)
 f_{\sigma} (x) \leq  3 C_P \, \left( 1 +  \left( \frac{|x|}{\sigma^2} \right)^{\operatorname{deg} (P)}\right)
 f_{\sigma} (x) \\ & \leq 3 C_P ( \operatorname{deg} (P))! \, e^{|x|/\sigma^2}  f_{\sigma} (x)
 \leq \frac{3 C_P ( \operatorname{deg} (P))!}{\sigma \sqrt{2 \pi}} e^{|x|/\sigma^2 - x^2/2\sigma^2} \\ & 
 \leq  C_P ( \operatorname{deg} (P))! e^{-x^2/4\sigma^2}
 \end{align*}
 \noindent
 the third inequality coming from the Taylor expansion of the exponential function, and the last inequality coming from the fact that 
 $\sigma>3$, and then $|x| > \sigma^{3/2} > 3^{3/2} > 4$, which implies that $e^{|x|/\sigma^2} \leq e^{x^2/4 \sigma^2}$. 
 One deduces: 
$$ P(D)(f_{\sigma}) (x) + \frac{1}{2\sigma^2 \sqrt{2 \pi}}  e^{-x^2/8 \sigma^2}  \geq 
e^{-x^2/4 \sigma^2} \left( \frac{1}{2\sigma^2  \sqrt{2 \pi}}  e^{x^2/8 \sigma^2}  -  C_P ( \operatorname{deg} (P))!  \right),$$
which implies that $g_{P,\sigma}(x) \geq 0$, provided that 
\begin{equation}
\frac{1}{2\sigma^2 \sqrt{2 \pi}}  e^{x^2/8 \sigma^2}  \geq C_P ( \operatorname{deg} (P))! \label{bouh}
\end{equation}
Now, since
$$ \frac{1}{2\sigma^2 \sqrt{2 \pi}}  e^{x^2/8 \sigma^2}  \geq  \frac{1}{2\sigma^2 \sqrt{2 \pi}}  e^{\sigma/8},$$
the inequality \eqref{bouh} holds for all $\sigma$ large enough, depending only on $P$. 
\end{proof}
\noindent
Once the positivity of $g_{P,\sigma}$ is proven (for $\sigma$ large enough, depending only on $P$), let us compute its Fourier transform: one checks that for all 
$\lambda \in \mathbb{R}$, 
$$\int_{-\infty}^{\infty} g_{P,\sigma} (x) e^{i \lambda x} dx = \frac{\sigma}{\sigma +1} \left( P(-i \lambda) e^{-\sigma^2 \lambda^2/2} + \frac{1}{\sigma} e^{-2 \sigma^2 \lambda^2} \right).$$
In particular, the value of the Fourier transform at $\lambda =0$ is equal to one, which implies that $g_{P, \sigma}$ is in fact a probability density. 
Hence, the following function is in $S_0$:
$$\lambda \mapsto \frac{\sigma}{\sigma +1} \left( P(-i \lambda) + \frac{1}{\sigma} e^{-3 \sigma^2 / 2 \lambda^2} \right).$$
By letting $\sigma \rightarrow \infty$, one deduces that the adherence of $S_0$, for the topology of uniform convergence on compact sets, contains the function
$$\lambda  \mapsto P(-i \lambda),$$
and then all the functions in $S$, by the Stone-Weierstrass theorem.

\subsection{Probabilistic proof: mod-Gaussian convergence for sums of i.i.d. random variables}
It is natural to ask whether there exists a general result of mod-Gaussian convergence for sums of i.i.d. random variables like there exists a central limit theorem. The answer is positive and provides in fact an alternative proof to  Theorem \ref{p1}. The result also outlines the interesting fact that mod-Gaussian convergence is closely related to cumulants. More precisely, we have the following result:
\begin{proposition} \label{cum1}
Let $k \geq 2$ be an integer, and let $(X_n)_{n \geq 1}$ be a sequence of i.i.d. variables in $L^{r}$ for some $r > k+1$, such that the $k$ first moments of $X_1$ are
 the same as the corresponding moments of a
standard gaussian variable. Then, the sequence of variables 
$$\left( \frac{1}{N^{1/(k+1)}} \, \sum_{n = 1}^N  X_n \right)_{N \geq 1}$$
converges in the mod-gaussian sense, with the sequence of means and variances 
$$m_N = 0, \: \sigma^2_N = N^{(k-1)/(k+1)},$$
to the function 
$$\lambda \mapsto e^{ (i \lambda)^{k+1} c_{k+1}/(k+1)!},$$
where $c_{k+1}$ denotes the $(k+1)$-th cumulant of $X_1$. 
\end{proposition}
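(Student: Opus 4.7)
My plan is to exploit the i.i.d.\ structure to reduce everything to a Taylor expansion, at the origin, of the logarithm of the characteristic function of $X_1$, in which the cumulants appear as Taylor coefficients. Write $\phi(t) = \mathbb E[e^{itX_1}]$. By independence, the characteristic function of $Y_N := N^{-1/(k+1)} \sum_{n \leq N} X_n$ equals $\phi(\lambda/N^{1/(k+1)})^N$; for $\lambda$ ranging in a fixed compact set and $N$ large, $\phi(\lambda/N^{1/(k+1)})$ is close to $1$, so the principal logarithm is well defined and $\phi(\lambda/N^{1/(k+1)})^N = \exp(N \log \phi(\lambda/N^{1/(k+1)}))$. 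Establishing the mod-Gaussian convergence thus reduces to showing that
$$N \log \phi\!\left(\frac{\lambda}{N^{1/(k+1)}}\right) + \frac{\sigma_N^2 \lambda^2}{2} \longrightarrow \frac{c_{k+1} (i\lambda)^{k+1}}{(k+1)!}$$
locally uniformly in $\lambda$.

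The next step is a Taylor expansion of $\log \phi$ at $0$. Under the integrability hypothesis $X_1 \in L^r$ with $r > k+1$, the characteristic function $\phi$ is of class $C^{k+1}$ (differentiate under the expectation) with $\phi^{(j)}(0) = i^j \mathbb E[X_1^j]$, so Taylor's theorem gives $\phi(t) = \sum_{j=0}^{k+1} i^j \mathbb E[X_1^j] t^j / j! + o(t^{k+1})$. Composing with the Taylor expansion of $\log(1+u)$ --- equivalently, invoking the standard identification of the Taylor coefficients of $\log \phi$ at $0$ with the cumulants of $X_1$ --- yields
$$\log \phi(t) = \sum_{j=1}^{k+1} \frac{c_j (it)^j}{j!} + o(t^{k+1}), \qquad t \to 0,$$
where $c_j$ denotes the $j$-th cumulant of $X_1$. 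Each of $c_1, \ldots, c_k$ is a polynomial in the first $k$ moments of $X_1$, so the assumption that the latter match those of a standard Gaussian forces $c_1 = 0$, $c_2 = 1$, and $c_3 = \cdots = c_k = 0$. Only $c_{k+1}$ survives, and the expansion collapses to
$$\log \phi(t) = -\frac{t^2}{2} + \frac{c_{k+1} (it)^{k+1}}{(k+1)!} + o(t^{k+1}).$$

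Plugging $t = \lambda/N^{1/(k+1)}$ into this identity, multiplying by $N$, and using the identities $N \cdot N^{-2/(k+1)} = N^{(k-1)/(k+1)} = \sigma_N^2$ and $N \cdot N^{-1} = 1$, one arrives at
$$N \log \phi\!\left(\frac{\lambda}{N^{1/(k+1)}}\right) = -\frac{\sigma_N^2 \lambda^2}{2} + \frac{c_{k+1} (i\lambda)^{k+1}}{(k+1)!} + \lambda^{k+1} \eps\!\left(\frac{\lambda}{N^{1/(k+1)}}\right),$$
where $\eps(t) \to 0$ as $t \to 0$. For $\lambda$ in any compact set the error term tends to $0$ uniformly, and exponentiating yields the asserted mod-Gaussian convergence.

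The one genuinely analytic step is the Peano remainder: one must verify that the little-$o$ in the Taylor expansion of $\log \phi$ is uniform on a neighborhood of $0$, so that the substitution $t = \lambda/N^{1/(k+1)}$ with $\lambda$ in a compact set produces a truly locally uniform limit. The assumption $r > k+1$ (strictly) is what makes this step painless, essentially because it gives a little Hölder regularity for $\phi^{(k+1)}$ at the origin; the rest is just bookkeeping of powers of $N$.
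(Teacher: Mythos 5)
Your proof is correct and follows essentially the same route as the paper's: Taylor-expand the characteristic function at the origin, use the moment-matching hypothesis to reduce the expansion to $-t^2/2 + c_{k+1}(it)^{k+1}/(k+1)!$ plus a remainder, then substitute $t=\lambda/N^{1/(k+1)}$, multiply by $N$ and exponentiate. The only (harmless) difference lies in the remainder term: the paper derives an explicit $O(|t|^{r})$ bound (so that $N\cdot O(N^{-r/(k+1)})\to 0$ genuinely uses $r>k+1$), whereas your Peano remainder $o(|t|^{k+1})$ already suffices after multiplication by $N$ and in fact only requires $X_1\in L^{k+1}$.
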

\begin{remark}
 Intuitively, this mod-gaussian convergence suggests to approximate the distribution of the renormalized
 partial sums of 
$(X_n)_{n \geq 1}$ by the convolution of a gaussian density and a function $H_{k, c_{k+1}}$ whose 
Fourier transform is $\lambda \mapsto e^{ (i \lambda)^{k+1} c_{k+1}/(k+1)!}$. The function $H_{k, c_{k+1}}$
 is not 
a probability density, since it takes some negative values: it appears in a paper by 
Diaconis and Saloff-Coste \cite{DSC} on convolutions of measures on $\mathbb{Z}$. 
\end{remark}

\begin{proof}
On can assume $r \in (k+1,k+2)$, and then one has for all $\lambda \in \mathbb{R}$,
$$\left| e^{i\lambda} - \sum_{j = 0}^{k+1} \frac{(i\lambda)^j}{j!} \right| \leq |\lambda|^r.$$
Hence, if $\phi$ denotes that characteristic function of $X_1$, and $(\mu_j)_{0 \leq j \leq k+1}$ the first successive moments of $X_1$, one has
$$\phi(\lambda) = \sum_{j = 0}^{k+1} \mu_j \, \frac{(i\lambda)^j}{j!} + O(|\lambda|^r),$$
when $\lambda$ goes to zero. 
Now, $(\mu_j)_{0 \leq j \leq k}$ and $\mu_{k+1} - c_{k+1}$ are also the first moments of the standard Gaussian variable, hence,
$$e^{-\lambda^2/2} = \sum_{j = 0}^{k+1} \mu_j \, \frac{(i\lambda)^j}{j!}  - c_{k+1} \, \frac{(i\lambda)^{k+1}}{(k+1)!} + O(|\lambda|^{k+2}).$$
Therefore,
$$\phi(\lambda) = e^{-\lambda^2/2} + c_{k+1} \, \frac{(i\lambda)^{k+1}}{(k+1)!} + O(|\lambda|^r),$$
and then
$$\phi(\lambda) \, e^{\lambda^2/2}  = 1 + c_{k+1} \, \frac{(i\lambda)^{k+1}}{(k+1)!} + O(|\lambda|^r).$$
One deduces that for fixed $\lambda$, 
\begin{equation}
[\phi(\lambda/N^{1/(k+1)})]^N e^{\lambda^2N^{(k-1)/(k+1)}/2} \underset{N \rightarrow \infty}{\longrightarrow} e^{c_{k+1} (i\lambda)^{k+1}/(k+1)!} \label{tety}
\end{equation}
Now, the left-hand side of \eqref{tety} is the characteristic function of 
$$ \frac{1}{N^{1/(k+1)}} \, \sum_{n = 1}^N  X_n,$$
divided by the characteristic function of a centered gaussian variable of variance $N^{(k-1)/(k+1)}$. 
\end{proof}
\begin{example}
If $(X_n)_{n \geq 1}$ are i.i.d. variables, such that $\mathbb{P}[X_1 =1] = \mathbb{P}[X_1 = -1] = 1/2$, then 
$$\left(\frac{1}{N^{1/4}} \, \sum_{n = 1}^N  X_n \right)_{N \geq 1}$$
converges, in the mod-gaussian sense, with the sequence of means and variances
$$m_N = 0, \; \sigma^2_N = \sqrt{N},$$
to the function $\lambda \mapsto e^{-\lambda^4/12}$. 
\end{example}
\noindent
{\bf Proof of Theorem 1.2.} After a possible multiplication of the variables $(X_n)_{n \geq 1}$ by a constant, one can obtain, from Proposition \ref{cum1},
all the exponential of monomials in $i \lambda$ as mod-gaussian limits, since the cumulant $c_{k+1}$ can be positive or negative. 
By taking sums of independent random variables, one deduces the exponential of all the polynomials in $i \lambda$ without constant term.  Now, by Stone-Weierstrass theorem, 
one obtains the exponential of all the continuous functions $f$ such that $f(- \lambda) = \overline{f(\lambda)}$ and $f(0) = 0$, i.e. all the non-vanishing 
functions in $S$. By taking approximations which avoid the zeros (which are finitely many), one obtains all the polynomial functions in $S$,  and by using 
again the Stone-Weierstrass theorem, one deduces another proof of Theorem \ref{p1}. 
 \begin{remark}
One may in fact go further in the cumulants approach to mod-Gaussian convergence for an arbitrary sequence of random variables $(X_n)$. In this case, the cumulants depend on $n$ and one needs to control the growth of the cumulants of order $k$ higher than $2$ as functions  of $(k,n)$. This approach is useful in some combinatorial framework and under some analytic assumptions one deduces precise large deviations estimates (with a good control on the error terms) from mod-* convergence. This is the topic of a forthcoming work.
\end{remark}

\section{Further examples and remarks}
All the limiting functions obtained from mod-* convergence correspond to functions which are in the space $S$. In other words, they
 can always be obtained as 
mod-gaussian limits. The functions of $S$ can also be viewed as the Fourier transforms of some special kind of distributions. 
 Indeed, let $\mathcal{E}$ be the space of functions from $\mathbb{R}$ to $\mathbb{R}$, generated by the functions 
$x \mapsto \cos(\mu x)$ for $\mu \geq 0$ and $x \mapsto \sin(\mu x)$ for $\mu > 0$. These functions form a basis of $\mathcal{E}$. 
Indeed, if for $p \geq 0$, $q \geq 0$, 
$\mu_1 > \mu_2 > \dots > \mu_p \geq 0$, $\mu'_1 > \dots > \mu'_q > 0$, $\alpha_1, \dots, \alpha_p, \alpha'_1, \dots \alpha'_q \neq 0$, 
the function 
$$g : x \mapsto \sum_{j = 1}^p \alpha_j \cos(\mu_j x) +  \sum_{k = 1}^q \alpha'_k \sin(\mu'_k x)$$
vanishes for all $x \in \mathbb{R}$, it vanishes for all $x \in \mathbb{C}$, since it is an entiere function. 
If $p \geq 1$, then for $y$ real and tending to infinity, 
$$\Re(g(iy)) = \alpha_1 e^{\mu_1 y} \left(\frac{1}{2}  \mathds{1}_{\mu_1 > 0} + \mathds{1}_{\mu_1 = 0} \right) + o(e^{\mu_1 y}),$$
and if $q \geq 1$, 
$$\Im(g(iy)) = - \frac{\alpha'_1}{2} e^{\mu'_1 y} + o(e^{\mu'_1 y}),$$
which contradicts the fact that $g$ is identically zero.  

One can then define the distributions with space of test functions $\mathcal{E}$ as the linear forms on this space. 
The following result clearly holds:
\begin{lemma}
A distribution $\mathcal{D}$, defined as a linear form on $\mathcal{E}$, is   
characterized 
by its values $\psi_{\mathcal{D}}(\mu)$ at the functions $x \mapsto \cos(\mu x)$ ($\mu \geq 0$),
and $\psi'_{\mathcal{D}}(\mu)$ at the functions $x \mapsto \sin(\mu x)$ ($\mu > 0$).
Moreover, if the distributions are canonically extended to complex test functions, then 
for $\lambda \in \mathbb{R}$, the image of $x \mapsto e^{i \lambda x}$ by $\mathcal{D}$ 
is given by 
$$\phi_{\mathcal{D}}(\lambda) = \psi_{\mathcal{D}}(\lambda) + i \psi'_{\mathcal{D}}(\lambda)$$
for $\lambda > 0$, 
$$\phi_{\mathcal{D}}(\lambda) = \psi_{\mathcal{D}}(0) $$
for $\lambda = 0$ and
$$\phi_{\mathcal{D}}(\lambda) = \psi_{\mathcal{D}}(|\lambda|) - i \psi'_{\mathcal{D}}(|\lambda|)$$
for $\lambda < 0$.
\end{lemma}
The function $\phi_{\mathcal{D}}$ can be viewed as the Fourier transform of $\mathcal{D}$.
The following also holds: 
\begin{lemma}
The equation $\phi_{\mathcal{D}} (-\lambda) 
= \overline{\phi_{\mathcal{D}} (\lambda)}$ is satisfied 
for all $\lambda \in \mathbb{R}$. Moreover, the map $(\psi_{\mathcal{D}}, \psi'_{\mathcal{D}})
\mapsto \phi_{\mathcal{D}}$ from $\mathcal{F}_1 \times \mathcal{F}_2$ to $\mathcal{G}$ is bijective, where
$\mathcal{F}_1$ is the 
space of functions from $\mathbb{R}_+$ to $\mathbb{R}$, $\mathcal{F}_2$ is the 
space of functions from $\mathbb{R}^*_+$ to $\mathbb{R}$, and $\mathcal{G}$ is the space of functions 
$\phi$ from $\mathbb{R}$ to $\mathbb{C}$ 
satisfying the equation $\phi (-\lambda) = \overline{\phi (\lambda)}$. The space of distributions 
on $\mathcal{E}$
is in bijection with $\mathcal{G}$, via the Fourier transform $\mathcal{D} \mapsto \phi_{\mathcal{D}}$.  
\end{lemma}
Since $S$ is included in $\mathcal{G}$, the functions in $S$ can be viewed, via inverse Fourier transform, as distributions with 
test space $\mathcal{E}$. Note that these distributions can be very singular, since we have a priori no control on the 
behavior of their Fourier transform at infinity: in general, they cannot be identified with tempered distributions in the usual sense. 
If $\mathcal{D}_1$ and $\mathcal{D}_2$ are two distributions with test space $\mathcal{E}$, one can define 
their convolution $\mathcal{D}_1 * \mathcal{D}_2$ as the distribution whose Fourier transform is the product $\phi_{\mathcal{D}_1} \phi_{\mathcal{D}_2}$.
If $\phi_{\mathcal{D}_2}$ nowhere vanishes, then the {\it deconvolution} of $\mathcal{D}_1$ by $\mathcal{D}_2$ is the unique distribution $\mathcal{D}$ such that 
$\mathcal{D} * \mathcal{D}_2 = \mathcal{D}_1$: one has $\phi_{\mathcal{D}} = \phi_{\mathcal{D}_1} / \phi_{\mathcal{D}_2}$. 
 
Now, the mod-gaussian convergence can be interpreted as follows: if the sequence of distributions
$(\mathcal{L}_n)_{n \geq 1}$ converges in the mod-gaussian sense, with the sequence of parameters $(m_n)_{n \geq 1}$ and $(\sigma^2_n)_{n \geq 1}$, to a 
function $\phi \in S$, then the deconvolution of $\mathcal{L}_n$ by the gaussian distribution $\mathcal{N} (m_n, \sigma^2_n)$ converges, in the 
sense of the distributions with test space $\mathcal{E}$, to the 
inverse Fourier transform of $\phi$ when $n$ goes
to infinity.

On the other hand, it is possible to enlarge the space of possible limit functions of mod-* convergence, by considering a weaker convergence.

\begin{definition}
Let $a > 0$, let $(\mathcal{L}_n)_{n \geq 1}$ be a sequence of probability distributions, and let $(\mathcal{M}_n)_{n \geq 1}$ be a sequence of 
probability distributions whose Fourier transforms do not vanish on the interval $(-a,a)$. Then 
$(\mathcal{L}_n)_{n \geq 1}$ converges $a$-mod-$(\mathcal{M}_n)_{n \geq 1}$ to a function $\phi$ from $(-a,a)$ to $\mathbb{C}$, 
if and only if the quotient of the Fourier transform of $\mathcal{L}_n$ by the Fourier transform of $\mathcal{M}_n$ converges to $\phi$, uniformly 
on all compact sets included in $(-a,a)$.
\end{definition}
\noindent 
The set of possible limiting functions is  given in the next proposition.
\begin{proposition}
The set of all the possible limits of $a$-mod-* convergence is the space $S_a$ of continuous functions $\phi$ from $(-a,a)$ to $\mathbb{C}$, such that 
$\phi(0) =1$ and $\phi(-\lambda) = \overline{\phi(\lambda)}$ for all $\lambda \in (-a,a)$. Moreover, all the functions in $S_a$ can be obtained from
$a$-mod-gaussian limit.
\end{proposition}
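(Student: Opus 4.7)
The plan is to prove the two inclusions separately: first that every limit of $a$-mod-$*$ convergence lies in $S_a$, and second that every function in $S_a$ arises as an $a$-mod-Gaussian limit. The first inclusion is essentially immediate: if $\phi$ is a uniform-on-compacts limit on $(-a,a)$ of functions of the form $\hat{\mathcal{L}}_n/\hat{\mathcal{M}}_n$ (where the denominator is nonvanishing on $(-a,a)$), then $\phi$ is continuous on $(-a,a)$, satisfies $\phi(0)=\hat{\mathcal{L}}_n(0)/\hat{\mathcal{M}}_n(0)=1$, and inherits the symmetry $\phi(-\lambda)=\overline{\phi(\lambda)}$ from the corresponding property of characteristic functions.

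For the converse, the key idea is to reduce to Theorem \ref{p1} by a cutoff trick. Given $\phi \in S_a$, I would choose, for each $n$, a real symmetric continuous cutoff $\chi_n : \mathbb{R} \to [0,1]$ equal to $1$ on $[-(a-1/n), a-1/n]$ and supported in $[-(a-1/(2n)), a-1/(2n)]$, and define
$$\tilde{\phi}_n(\lambda) = \chi_n(\lambda)\phi(\lambda) + (1-\chi_n(\lambda)),$$
with $\phi$ viewed as arbitrary (say, identically $1$) outside the support of $\chi_n$. One checks directly that $\tilde{\phi}_n$ is continuous on $\mathbb{R}$, satisfies $\tilde{\phi}_n(0) = 1$, and obeys $\tilde{\phi}_n(-\lambda) = \overline{\tilde{\phi}_n(\lambda)}$; hence $\tilde{\phi}_n \in S$. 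By Theorem \ref{p1}, there exists $\phi_n \in S_0$ such that $\sup_{|\lambda| \leq a - 1/n}|\phi_n(\lambda) - \tilde{\phi}_n(\lambda)| < 1/n$. Since $\tilde{\phi}_n = \phi$ on $[-(a-1/n), a-1/n]$, we obtain a sequence $(\phi_n) \subset S_0$ with $\phi_n \to \phi$ uniformly on every compact subset of $(-a,a)$.

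It remains to note that each $\phi_n \in S_0$ can be written as $\hat{\mathcal{L}}_n/\hat{\mathcal{M}}_n$ where $\mathcal{L}_n$ is a probability distribution and $\mathcal{M}_n = \mathcal{N}(m_n, \sigma_n^2)$ is Gaussian; since Gaussian Fourier transforms never vanish, this places us exactly in the framework of the definition, and $(\mathcal{L}_n)$ converges $a$-mod-Gaussian to $\phi$. This simultaneously proves the second inclusion and the \emph{moreover} clause.

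The main conceptual point — and the only non-routine step — is the construction of $\tilde{\phi}_n \in S$ agreeing with $\phi$ on a large compact subinterval of $(-a,a)$; once this is in place, Theorem \ref{p1} does all the real work. There is no substantive obstacle: the localized nature of the $a$-mod-$*$ definition makes the problem strictly easier than the global one solved in Theorem \ref{p1}, because any pathological behavior of $\phi$ near $\pm a$ (e.g.\ blow-up faster than any Gaussian) is simply hidden by the cutoff and plays no role.
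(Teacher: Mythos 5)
Your proof is correct and follows essentially the same route as the paper: reduce to Theorem \ref{p1} by observing that restrictions to $(-a,a)$ of functions in $S$ are dense in $S_a$, and that each such restriction is an $a$-mod-Gaussian limit. The only difference is that you make the density step explicit via the cutoff $\chi_n$ and then produce a single sequence in $S_0$ converging to $\phi$ locally uniformly on $(-a,a)$ (thereby also handling the diagonal-extraction step that the paper's one-line argument leaves implicit), which is a welcome clarification rather than a deviation.
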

\begin{proof}
It is clear that all the $a$-mod-* limits are in $S_a$. Conversely, from Theorem \ref{p1}, all the restrictions to $(-a,a)$ of functions in $S$ can be obtained 
as $a$-mod-gaussian limits. The set of functions obtained in this way is dense in $S_a$, for the uniform convergence on compact subsets of $(-a,a)$. 
\end{proof}
\begin{remark}
The convergence described here is quite weak. In particular, for $\mathcal{M}_n = \delta_0$ (Dirac measure at zero), it does not implies convergence in law. Indeed, 
the Fourier transform of a probability distribution does not characterize it after restriction to a finite interval. For example 
the measure $(1- \cos x)/(\pi x^2) dx$
has Fourier transform
$$\lambda \mapsto (1-|\lambda|)_+,$$
the measure $\delta_0/2 + (1- \cos (x/2))/(\pi x^2) dx$ has Fourier transform
$$\lambda \mapsto 1/2 \, + (1/2-|\lambda|)_+,$$
and these two Fourier transforms coincide on the interval $[-1/2,1/2]$.
\end{remark}
\noindent
For a concrete example of $a$-mod-Gaussian convergence, we refer to Example 4 from \cite{KN} which is taken from random matrix theory and which is essentially due
to Wieand \cite{wieand}. Let $T_N\in U(N)$ be a random unitary matrix which is Haar distributed.  All eigenvalues are then on the unit circle. We consider the discrete valued random variable counting the number of eigenvalues lying in some fixed arc of the unit circle. More precisely, let $\gamma\in(0,\frac{1}{2})$ and let $$I=\{e^{2i\pi\theta};\;\; |\theta|\leq\gamma\}.$$
Then define $X_N$ to be the number of eigenvalues of $T_N$ in $I$. Using asymptotics of Toplitz determinants with discontinuous symbols, one can show that as $N\to\infty$, for all $|t|<\pi$,
$$\mathbb E\left[e^{it(X_N-2\gamma N)}\right]\sim \exp\left(-\frac{t^2}{2\pi^2}\log N\right)\left(2-2\cos 4\pi\gamma\right)^{\frac{t^2}{4\pi^2}} G\left(1-\frac{t}{2\pi}\right)G\left(1+\frac{t}{2\pi}\right),$$where $G$ is the Barnes double Gamma function. The restriction on $t$ is necessary since the characteristic function of $X_N$ is $2\pi$-periodic.

We would like now to report on two interesting examples of $a$-mod-Cauchy convergence related to arithmetics and which are in fact 
re-interpretations of results of Vardi~\cite{vardi} and of
Sarnak~\cite{sarnak}.
\par
First, recall that a Cauchy variable with parameter $\gamma>0$ is one
with law given by
$$
d\mu_{\gamma}=\frac{\gamma}{\pi}\frac{1}{\gamma^2+x^2} dx,
$$
and with characteristic function
$$
\int_{\Rr}{e^{itx}d\mu_{\gamma}(x)}=e^{-\gamma |t|},\quad\quad t\in\Rr.
$$
\par
The most natural definition of mod-Cauchy convergence would then be
that $(X_N)$ converges in mod-Cauchy sense with parameters
$(\gamma_N)$ and limiting function $\Phi$ if we have
$$
\lim_{N\ra +\infty}{\exp(\gamma_N|t|)\mathbb{E}[e^{itX_N}]}=\Phi(t)
$$
and the limit is locally uniform in $t$ (so $\Phi$ is continuous and
$\Phi(0)=1$). Let's say that we have $a$-mod-Cauchy
convergence if the limits above exist, locally uniformly, for
$|t|<a$ for some $a>0$. This restricted convergence is
sufficient to ensure the following:

\begin{fact}
  If $(X_N)$ converges $a$-mod-Cauchy sense with parameters
  $(\gamma_N)$ and some $a>0$, then we have convergence in law
$$
\frac{X_N}{\gamma_N}\Longrightarrow \mu_1.
$$
\end{fact}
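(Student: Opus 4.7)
The plan is to reduce weak convergence of $X_N/\gamma_N$ to an application of L\'evy's continuity theorem by reading off its characteristic function directly from the ratio appearing in the $a$-mod-Cauchy definition. Write $\Phi_N(t):=\exp(\gamma_N|t|)\,\mathbb{E}[e^{itX_N}]$, so that by assumption $\Phi_N\to\Phi$ locally uniformly on $(-a,a)$. I shall use the natural standing hypothesis $\gamma_N\to+\infty$ (the analogue of $\sigma_N\to\infty$ in mod-Gaussian and $\gamma_n\to\infty$ in mod-Poisson convergence, as recalled in the introduction); it is needed even to make the statement non-trivial, since for bounded $\gamma_N$ simple examples (e.g.\ $X_N\equiv 0$, $\gamma_N\equiv 1$) show the conclusion fails.

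Substituting $t=s/\gamma_N$ into the definition of $\Phi_N$ yields, for every $s\in\Rr$, the factorization
\[
\mathbb{E}\bigl[\exp(is X_N/\gamma_N)\bigr]\;=\;e^{-|s|}\,\Phi_N(s/\gamma_N).
\]
Since $\gamma_N\to\infty$, the argument $s/\gamma_N$ lies inside any prescribed compact neighborhood of $0$ in $(-a,a)$ for $N$ large, and tends to $0$. Combining the locally uniform convergence $\Phi_N\to\Phi$ on such a neighborhood with the continuity of the limit $\Phi$ at $0$ and the normalization $\Phi(0)=1$ forces $\Phi_N(s/\gamma_N)\to 1$, and hence
\[
\mathbb{E}\bigl[\exp(is X_N/\gamma_N)\bigr]\;\longrightarrow\;e^{-|s|}
\]
for every $s\in\Rr$. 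The right-hand side is the characteristic function of $\mu_1$, so L\'evy's continuity theorem delivers the desired weak convergence $X_N/\gamma_N\Rightarrow \mu_1$.

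The only point requiring a little care is the passage from pointwise convergence of $\Phi_N$ at fixed points to convergence at the moving point $s/\gamma_N$. I would handle this by the standard $\varepsilon/2$-split
\[
|\Phi_N(s/\gamma_N)-1|\;\leq\;\sup_{|t|\leq\delta}|\Phi_N(t)-\Phi(t)|\;+\;|\Phi(s/\gamma_N)-\Phi(0)|,
\]
with $\delta>0$ chosen so that $[-\delta,\delta]\subset(-a,a)$ and contains $s/\gamma_N$ for all large $N$; the first term vanishes by the locally uniform hypothesis and the second by continuity of $\Phi$ at $0$. No other step presents any real difficulty, and the whole argument is essentially a one-line computation once the factorization above is in hand.
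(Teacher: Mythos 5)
Your proof is correct and is precisely the standard argument (substitute $t=s/\gamma_N$, use locally uniform convergence plus $\Phi(0)=1$ to kill the limiting-function factor, then apply L\'evy's continuity theorem) that the paper leaves implicit, since the Fact is stated there without proof. You are also right to make explicit the hypothesis $\gamma_N\to+\infty$, which the paper's statement omits but which is genuinely needed (your counterexample $X_N\equiv 0$, $\gamma_N\equiv 1$ shows this) and which holds in both of the paper's arithmetic examples, where $\gamma_N$ grows like a logarithm.
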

We now detail our two examples of $a$-mod-Cauchy convergence from number theory. Note that, although they seem to involve very
different objects, they are in fact closely related through the way
they are proved using spectral theory for certain differential
operator involving complex multiplier systems on the modular surface
$SL(2,\mathbb Z)\backslash \mathbb{H}$.

\begin{example}[Dedekind sums](See~\cite{vardi}) The \emph{Dedekind
    sum} $s(d,c)$ is defined by
$$
s(d,c)=\sum_{h=1}^{d-1}{\Bigl(\Bigl(\frac{hd}{c}\Bigr)\Bigr)
\Bigl(\Bigl(\frac{h}{c}\Bigr)\Bigr)}
,\quad\quad ((x))=\begin{cases}
0&\text{ if $x$ is an integer}\\
x-\lfloor x\rfloor -1/2,&\text{ otherwise.}
\end{cases}
$$
for $1\leq d<c$ integers with $(c,d)=1$.
\par
Vardi proved the existence of a renormalized Cauchy limit for
$s(d,c)$: precisely, let 
$$
F_N=\{(c,d)\,\mid\, 1\leq d<c<N,\quad (c,d)=1\},
$$
for $N\geq 1$, and give it the probability counting measure $\mathbb{P}_N$
and expectation denoted $\mathbb{E}_N(\cdot)$. For any $a<b$, we then
have (\cite[Theorem 1]{vardi}) the limit
$$
\lim_{N\ra +\infty}{\mathbb{P}_N\Bigl(a<\frac{s(d,c)}{(\log
    c)/(2\pi)}<b\Bigr)}= \mu_{1}([a,b]).
$$
\par
or equivalently
(cf.~\cite[Prop. 1]{vardi})
$$
\lim_{N\ra +\infty}{\mathbb{P}_N\Bigl(a<\frac{s(d,c)}{(\log
    N)/(2\pi)}<b\Bigr)}= \mu_{1}([a,b]).
$$
The latter is obtained as consequence (using the Fact above)
of a restricted mod-Cauchy convergence.

\begin{theorem}[Vardi]
  Let $D_N$ be the random variable defined on $F_N$ by $(d,c)\mapsto
  s(d,c)$. Then, for any $\eps>0$, we have
$$
\mathbb{E}_N(e^{itD_N})=\exp(-\gamma_N|t|)\Phi(t)+O(N^{-2/3+\eps})
$$
uniformly for $|t|<2\pi$ where $\gamma_N=\frac{1}{2\pi}(\log N/4)$ and
$$
\Phi(t)=(1-\tfrac{|t|}{4\pi})^{-1}
\ 
\Bigl(
\frac{3}{\pi}\int_{SL(2,\Zz)\backslash \mathbf{H}}{
(y|\eta(z)|^4)^{\tfrac{t}{2\pi}}\,\frac{dxdy}{y^2}
}
\Bigr)^{-1}
$$
the function $\eta(z)$ being the Dedekind eta function
$$
\eta(z)=e^{i\pi z/12}\prod_{n\geq 1}{(1-e^{2i\pi nz})}
$$
defined for $\Im(z)>0$.
\end{theorem}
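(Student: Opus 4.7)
The plan is to reverse-engineer the theorem from Vardi's original proof, which rests on the spectral theory of the modular surface equipped with the multiplier system coming from the Dedekind eta function. The starting point is the classical transformation law: for $\gamma=\bigl(\begin{smallmatrix}a&b\\c&d\end{smallmatrix}\bigr)\in SL(2,\Zz)$ with $c>0$,
\[
\log\eta(\gamma z)=\log\eta(z)+\tfrac{1}{2}\log((cz+d)/i)+\tfrac{\pi i}{12}\Bigl(\tfrac{a+d}{c}-12\, s(d,c)\Bigr),
\]
from which the real-analytic function $y|\eta(z)|^{4}$ is seen to be $SL(2,\Zz)$-invariant, and multiplying this cocycle identity by $it$ and exponentiating brings out $e^{it\,s(d,c)}$ as the principal factor indexed by $(c,d)$, modulo a harmless elementary factor depending on $(a+d)/c$.

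First, I would rewrite
\[
\frac{1}{|F_N|}\sum_{(c,d)\in F_N}e^{it\,s(d,c)}
\]
as a partial sum of the Poincar\'e--Eisenstein series
\[
E(z,s;t)=\sum_{\gamma\in\Gamma_\infty\backslash SL(2,\Zz)}(\Imag\gamma z)^{s}\,\chi_t(\gamma),\qquad \chi_t(\gamma)=e^{it\,s(d,c)},
\]
i.e.\ of the standard Eisenstein series of $SL(2,\Zz)$ twisted by the multiplier attached to $|\eta|^{2t/\pi}$. A Perron-type integral with cutoff $c<N$ converts the sum into a contour integral of $E(z,s;t)$, which one shifts past its rightmost pole at $s=1+it/(4\pi)$. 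The residue there is evaluated by Rankin--Selberg unfolding against $(y|\eta(z)|^{4})^{t/2\pi}$ over $\mathcal{F}=SL(2,\Zz)\backslash\mathbf{H}$, producing the reciprocal of $\tfrac{3}{\pi}\int_{\mathcal{F}}(y|\eta|^{4})^{t/2\pi}\,dxdy/y^{2}$, while the factor $(1-|t|/(4\pi))^{-1}$ arises from $1/(1+it/(4\pi))$ combined with its conjugate to enforce the symmetry $t\mapsto -t$ forced by $s(d,c)\in\Rr$. Dividing by $|F_N|\sim 3N^{2}/\pi^{2}$ and retaining the $N^{1+it/(4\pi)}$ contributed by the pole rewrites the main term as $\exp(-\gamma_N|t|)\Phi(t)$ with $\gamma_N=(\log(N/4))/(2\pi)$.

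The hard part is the error bound $O(N^{-2/3+\eps})$, uniform for $|t|<2\pi$. After removing the rightmost pole, the shifted contour at $\Reel(s)=1/2+\delta$ captures contributions from the discrete Maass spectrum with the $\eta$-multiplier and from the continuous Eisenstein spectrum along the critical line; for $t$ in a compact subinterval of $(-2\pi,2\pi)$ no exceptional small eigenvalues contribute further main terms. Bounding the residual spectral sum polynomially in $N$, uniformly in $t$, requires Weyl-type subconvex estimates together with second-moment bounds on Maass forms in the spectral parameter; the exponent $2/3$ is what one obtains by optimizing the contour shift against a Weyl-quality saving, and controlling these bounds uniformly up to the boundary $|t|=2\pi$, where the integral defining $\Phi(t)$ ceases to converge absolutely, is the genuine technical core of Vardi's argument.
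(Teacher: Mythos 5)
The first thing to say is that the paper does not reprove Vardi's result at all: its ``proof'' of this theorem is a citation of \cite[Prop.~2]{vardi} together with the dictionary $t=2\pi r$, the trivial case $t=0$, and the symmetry $r\leftrightarrow -r$ used to pass from $0<r<1$ to $|t|<2\pi$. Your attempt to reconstruct the spectral argument behind Vardi's proposition is therefore a genuinely different and far more ambitious route, but as written it contains errors that would derail it. The central one is the location of the singularity: the twisted Eisenstein/Kloosterman--Selberg series attached to the $\eta$-multiplier does not have its relevant pole at the complex point $s=1+it/(4\pi)$, but at the \emph{real} point $s_0=1-|t|/(4\pi)$, coming from the exceptional discrete eigenvalue $\lambda_0=s_0(1-s_0)$ of the Laplacian with the multiplier system of $\eta^{t/\pi}$, whose eigenfunction is essentially $y^{|t|/(4\pi)}|\eta(z)|^{t/\pi}$. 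This real location is precisely what produces the decay $\exp(-\gamma_N|t|)=(N/4)^{-|t|/(2\pi)}$ after dividing the sum over $c<N$ (of size $N^{2s_0}$) by $|F_N|\sim 3N^2/\pi^2$; a pole on the line $\Reel(s)=1$ would give a main term of the same order as $|F_N|$ and no Cauchy decay at all. Relatedly, your derivation of the factor $(1-|t|/(4\pi))^{-1}$ as the product of $1/(1+it/(4\pi))$ with its conjugate is arithmetically wrong: that product equals $(1+t^2/16\pi^2)^{-1}$. The correct factor is $1/s_0$ from the residue at $s_0$, which is also why the theorem's limiting function degenerates at $|t|=4\pi$, as the paper remarks.

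Two further points. The function $\chi_t(\gamma)=e^{it\,s(d,c)}$ is not a multiplier system on $SL(2,\Zz)$: $s(d,c)$ is only a quasimorphism (via the Rademacher function), so your twisted Eisenstein series is not well defined as written. The genuine multiplier is that of $\eta^{t/\pi}$, whose exponent carries the additional term $(a+d)/(12c)$; for $(c,d)\in F_N$ this is a unimodular factor that is bounded but not close to $1$, so it is not a ``harmless elementary factor'' --- stripping it off to isolate $s(d,c)$ is a genuine step of Vardi's argument. Finally, the exponent $2/3$ in the error term does not come from Weyl-type subconvexity or second-moment bounds on Maass forms; it comes from the standard Goldfeld--Sarnak analysis of the Kloosterman--Selberg zeta function for this multiplier system, balancing the contour shift against elementary bounds for the associated generalized Kloosterman sums. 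If your goal is to match what the paper actually does, the honest proof here is the notational translation of \cite[Prop.~2]{vardi}; if your goal is to reprove Vardi, the pole location, the residue computation, and the treatment of the multiplier all need to be corrected before the sketch can be completed.
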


\begin{proof}
  This follows from~\cite[Prop. 2]{vardi}, after making minor
  notational adjustments. In particular: Vardi uses $2\pi r$ instead
  of $t$; the case $t=0$ is omitted in Vardi's statement, but it is
  trivial; only the case $0<r<1$ is mentioned, but there is a symmetry
  $r\leftrightarrow -r$ (see~\cite[p. 7]{vardi}) that extends the
  result to $-1<r\leq 0$.
\end{proof}

Because
$$
\exp(-|t|\gamma_N)=\Bigl(\frac{N}{4}\Bigr)^{-\tfrac{t}{2\pi}},
$$
we see from the error term that the formula gives, in fact, only
restricted convergence with a well-defined limit for
$|t|<\frac{4\pi}{3}$.  It is not clear on theoretical grounds whether
this is optimal or not (note also the pole of the first factor of
$\Phi(t)$ for $t=\pm 4\pi$), but the numerical experiments summarized
in Figures 1 to 4, which illustrate the behavior of 
$$
\expect_N(e^{itD_N})\exp(\gamma_N|t|)
$$
for $N\leq 5000$ and $t\in \{\pi/2, \pi, 2\pi, 4\pi\}$, tend to
indicate that there is no limit when $t$ is large (note in particular
the $y$-scale for the last picture).
\begin{figure}[ht]
\centering
\includegraphics[width=5in]{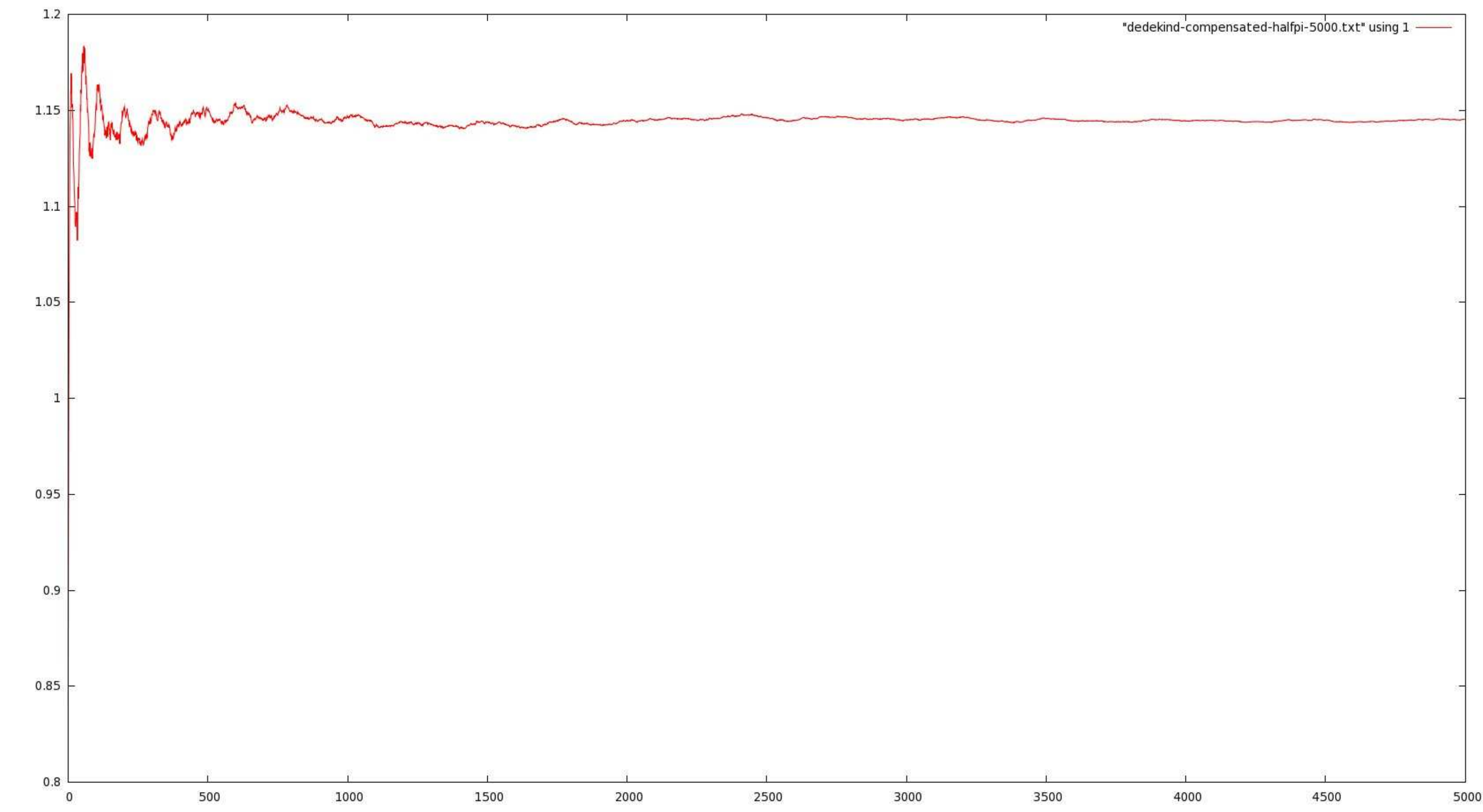}
\caption{$t=\pi/2$}
\end{figure}
\begin{figure}[ht]
\centering
\includegraphics[width=5in]{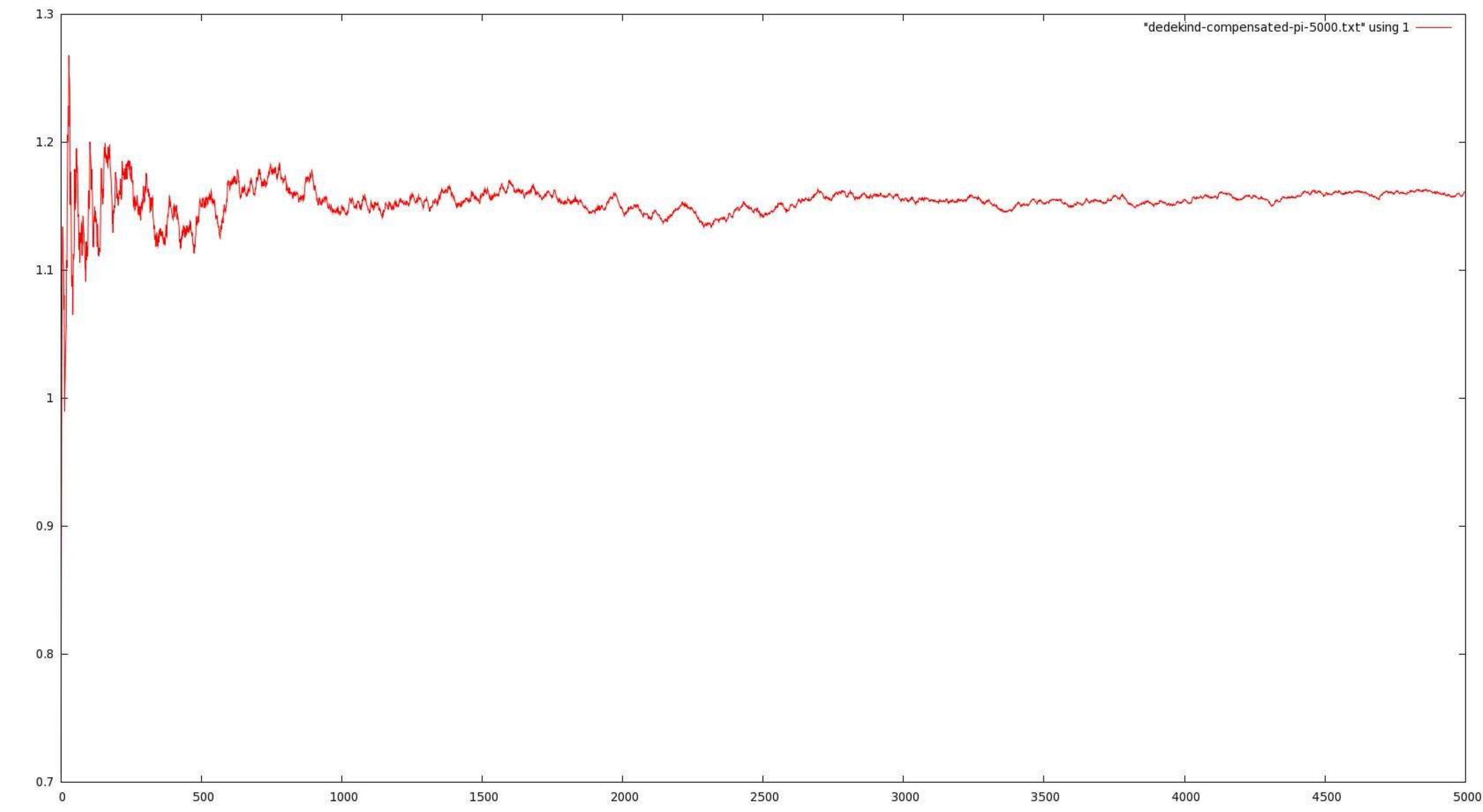}
\caption{$t=\pi$}
\end{figure}
\begin{figure}[ht]
\centering
\includegraphics[width=5in]{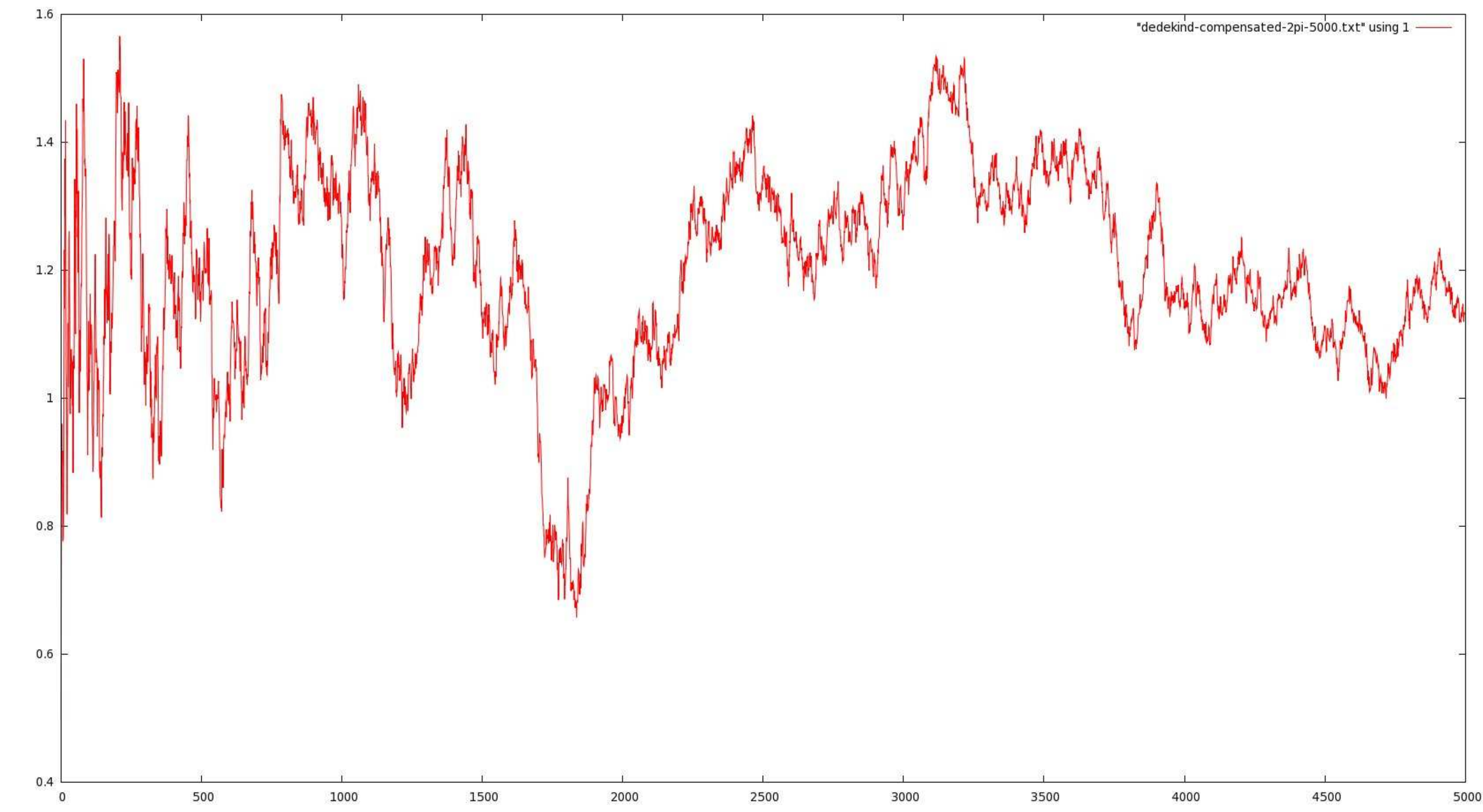}
\caption{$t=2\pi$}
\end{figure}
\begin{figure}[ht]
\centering
\includegraphics[width=5in]{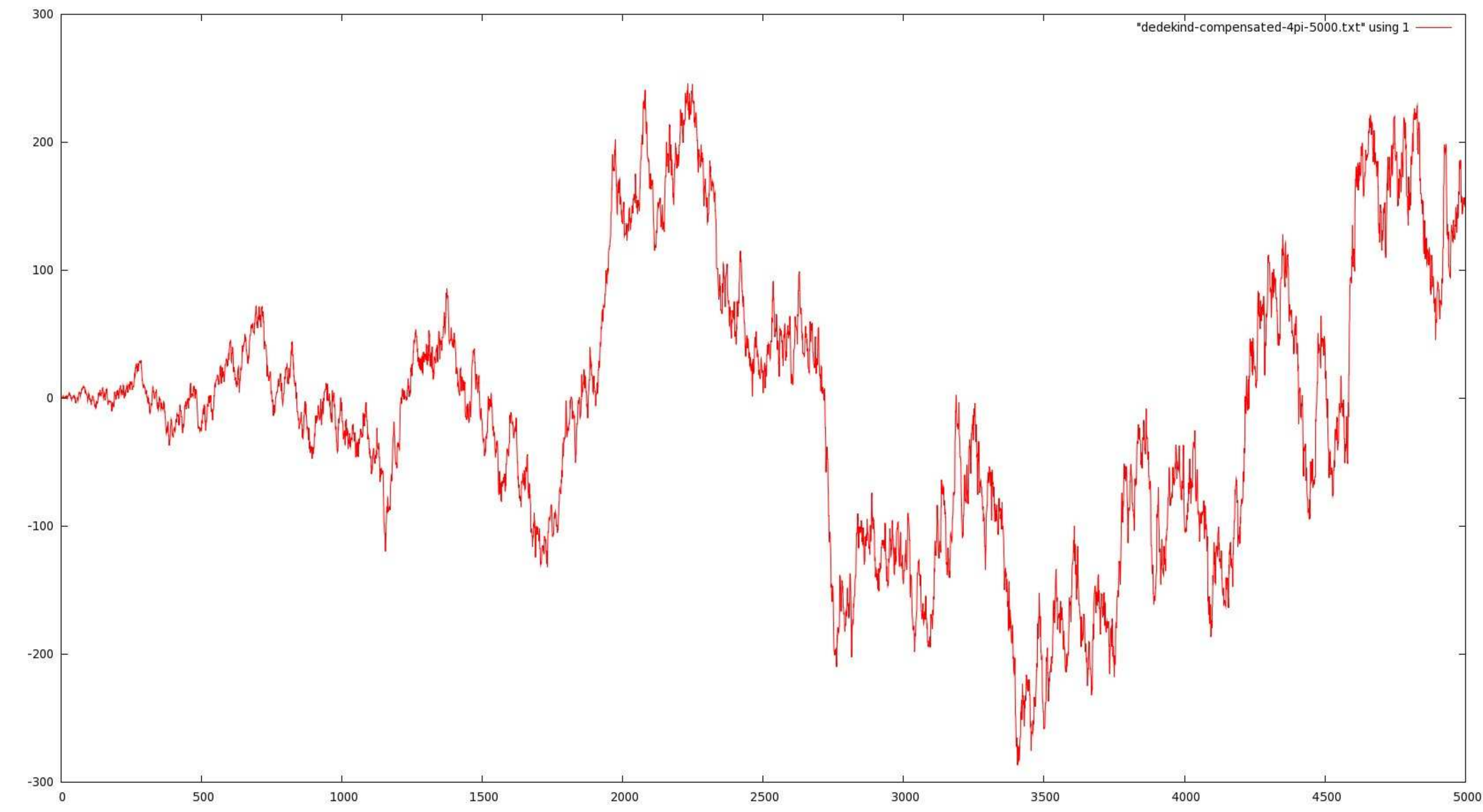}
\caption{$t=4\pi$}
\end{figure}
\par
Concerning the limiting function, recall that the measure
$$
\frac{3}{\pi}\frac{dxdy}{y^2}
$$
is a probability measure on the modular surface, so $\Phi(t)$
(surprisingly?) involves the inverse of a \emph{Laplace transform} of
the distribution function of $\log (y|\eta(z)|^4)$. 
\end{example}

\begin{example}[Linking numbers of modular geodesics]
  (See~\cite{sarnak} and~\cite{mozzochi}). The second example looks very different, as it
  concerns issues of geometry and topology. More precisely, following
  Ghys, Sarnak considers the asymptotic behavior of a map 
$$
C\mapsto \mathrm{lk}(k_C),
$$
where $C$ runs over the set $\Pi$ of prime closed geodedics in
$SL(2,\mathbb Z)\backslash \mathbb{H}$ and $\mathrm{lk}(k_C)$ is the linking
number of a knot associated to $C$ and the trefoil knot -- the
relation coming from an identification of the homogeneous space
$SL(2,\mathbb Z)\backslash SL(2,\mathbb R)$ with the complement in $\mathbb{S}^3$
of the trefoil knot. This is also accessible more concretely by the
classical identification of $\Pi$ with the set of primitive (i.e., not
of the form $g^n$, $n\geq 2$) hyperbolic (i.e., with $|\operatorname{Tr}(g)|>2$)
conjugacy classes in $SL(2,\mathbb Z)$. In this identification
$C\leftrightarrow g$, one has
$$
\mathrm{lk}(k_C)=\psi(g),
$$
where $\psi\,:\, PSL(2,\mathbb Z)\ra \mathbb Z$ is a fairly classical map (called
the Rademacher map), which is not a homomorphism but a
``quasi-homomorphism'' (namely, the map $(g,h)\mapsto
\psi(gh)-\phi(g)-\psi(h)$ is bounded on $PSL(2,\mathbb Z)^2$).  In turn, this $\psi$-function is related to
the multiplier system for the $\eta$ function. 
\par
Now, for $x>0$, let
$$
\Pi_x=\{g\in \Pi\,\mid\, N(g)\leq x\}
$$
where the ``norm'' $N(g)$ is defined and related to the length
$\ell(g)$ of the closed geodesic by
$$
N(g)=\Bigl(\frac{\operatorname{Tr}(g)+\sqrt{\operatorname{Tr}(g)^2-4}}{2}\Bigr)^2,\quad\quad
\ell(g)=\log N(g).
$$
\par
Let $\mathbb{P}_x$ denote the probability measure where each $g\in \Pi_x$
has weight proportional to $\ell(g)$; the normalizing factor to ensure
that it is a probability measure is
$$
\sum_{N(g)\leq x}{\log N(g)}\sim x
$$
as $x\ra +\infty$, by Selberg's Prime Geodesic Theorem (this can be
made much more precise, see e.g.~\cite{iwaniec-pgt}). Let $\mathbb{E}_x$ denote the corresponding
expectation operator.
\par
Sarnak~\cite[Theorem 3]{sarnak} (see also the detailed proofs by Mozzochi~\cite{mozzochi})proves a limiting Cauchy behavior:
$$
\lim_{x\ra
  +\infty}{\mathbb{P}_x\Bigl(a<\frac{\mathrm{lk}(g)}{\ell(g)}<b\Bigr)}
=\mu_1([a,b])
$$
for any $a<b$.
\par
Again, if one looks at the proof, one sees that this is deduced from:

\begin{theorem}[Sarnak]
  Let $\mathrm{lk}_x$ denote the random variable $g\mapsto
  \mathrm{lk}(g)=\psi(g)$ on $\Pi_x$. Then for $|t|\leq \pi/12$, we
  have
$$
\mathbb{E}_x(e^{it \mathrm{lk}_x})=\exp(-|t|\gamma_x)\Phi_1(t)+O(x^{3/4})
$$
where $\gamma_x=\frac{3}{\pi}(\log x)$ and
$\Phi_1(t)=\frac{1}{1-\frac{3|t|}{\pi}}$. 
\end{theorem}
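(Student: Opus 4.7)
The plan is to reduce the assertion to a twisted Selberg Prime Geodesic Theorem for the multiplier system attached to powers of the Dedekind $\eta$ function. Writing
\[
\mathbb E_x(e^{it\mathrm{lk}_x})=\frac{\pi_{t}(x)}{\pi_{0}(x)},\qquad \pi_{t}(x):=\sum_{N(g)\leq x}\log N(g)\,e^{it\psi(g)},
\]
the classical Prime Geodesic Theorem gives $\pi_{0}(x)=x+O(x^{3/4})$, so the whole task is to obtain a sharp asymptotic for the twisted numerator $\pi_{t}(x)$. The decisive observation is that $g\mapsto e^{it\psi(g)}$ is, up to a benign factor, the unitary multiplier on $\mathrm{SL}(2,\mathbb Z)$ determined by the (possibly non-integral) power $\eta^{2t/\pi}$; this reflects Rademacher's description of the transformation law of $\log\eta$, which exhibits $\psi$ precisely as the argument of the $\eta$-multiplier. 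One may therefore regard $e^{it\psi}$ as a one-parameter family, holomorphic in $t$, of unitary characters of $\mathrm{PSL}(2,\mathbb Z)$, and the Selberg zeta function $Z(s,t)$ for this twist encodes $\pi_t(x)$ on its geometric side while its zeros are governed, on the spectral side, by the Laplacian $\Delta_t$ acting on sections of the associated line bundle over $\mathrm{SL}(2,\mathbb Z)\backslash\mathbb H$.

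Next I would carry out the spectral analysis of $\Delta_t$. At $t=0$ one has the ordinary Laplacian with its simple zero eigenvalue; perturbing in $t$ produces a single exceptional real eigenvalue $\lambda_0(t)=s_0(t)(1-s_0(t))$ deforming out of $0$, while every other eigenvalue and the continuous spectrum remain in the half-plane $\Reel(s)\leq 3/4$. A direct computation tied to the structure of $\eta$ identifies $s_0(t)=1-3|t|/\pi$ exactly in the relevant range. Feeding this into the standard explicit formula for $Z(s,t)$ and applying a Tauberian argument yields
\[
\pi_t(x)=\frac{x^{s_0(t)}}{s_0(t)}+O(x^{3/4})=\frac{x^{1-3|t|/\pi}}{1-3|t|/\pi}+O(x^{3/4}),
\]
provided $s_0(t)>3/4$, i.e. $|t|\leq\pi/12$. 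Dividing by $\pi_0(x)=x+O(x^{3/4})$ and recognising $\exp(-|t|\gamma_x)=x^{-3|t|/\pi}$ then produces the stated identity with $\Phi_1(t)=(1-3|t|/\pi)^{-1}$ and error of the required order.

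The main obstacle is the spectral input: one must locate $s_0(t)$ exactly, rule out any other exceptional eigenvalue in the strip $3/4<\Reel(s)<s_0(t)$ uniformly for $|t|\leq\pi/12$, and obtain the error $O(x^{3/4})$ in the twisted prime geodesic theorem with explicit dependence on $t$. This is the arithmetic heart of the argument and rests on the detailed analysis of the $\eta$-multiplier line bundle via its link with Maass forms of weight tied to $t/\pi$; once this is granted, the rest is essentially bookkeeping in Selberg's explicit formula. The restriction $|t|\leq\pi/12$ is thus structural rather than technical: it is the exact threshold at which the exceptional-eigenvalue main term ceases to dominate the $O(x^{3/4})$ error coming from the bulk of the spectrum.
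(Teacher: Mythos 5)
The paper does not prove this theorem at all in any substantive sense: its ``proof'' is a one-line citation of Sarnak's equation (16), together with the dictionary $v_r(g)=e^{i\pi r\psi(g)/6}$, $r=6t/\pi$, translating Sarnak's normalization into the one used here. Your proposal instead reconstructs the skeleton of Sarnak's underlying argument: write $\mathbb{E}_x(e^{it\mathrm{lk}_x})$ as a ratio of a twisted and an untwisted weighted geodesic count, identify $e^{it\psi}$ with the $\eta$-multiplier system of weight $r=6t/\pi$, and extract the main term from the exceptional eigenvalue $s_0(t)=1-r/2=1-3|t|/\pi$ of the corresponding Laplacian via the Selberg zeta function. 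The consistency checks all work out: under $r=6t/\pi$ the classical lowest eigenvalue $s_0=1-r/2$ of the weight-$r$ $\eta$-multiplier Laplacian becomes $1-3|t|/\pi$, the threshold $s_0>3/4$ becomes exactly $|t|<\pi/12$, and $x^{s_0(t)-1}=x^{-3|t|/\pi}=\exp(-|t|\gamma_x)$ with $1/s_0(t)=\Phi_1(t)$. So your route is the ``real'' proof where the paper's is pure bookkeeping; what the paper's approach buys is brevity and correctness by reference, while yours buys an explanation of \emph{why} the answer has this shape and why $\pi/12$ is the natural threshold. (Incidentally, your computation produces a decaying error $O(x^{-1/4})$ after division by $\pi_0(x)$, which makes the theorem non-vacuous; the $O(x^{3/4})$ in the statement as printed is evidently a typo, since the left-hand side is bounded.)

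That said, as a self-contained proof your proposal has a genuine gap, and you name it yourself: the entire spectral input --- that $e^{it\psi}$ really is the weight-$6t/\pi$ $\eta$-multiplier (note $\psi$ is only a quasi-homomorphism, so $e^{it\psi}$ is \emph{not} a character of $PSL(2,\mathbb{Z})$ and one must work with a genuine nonzero-weight multiplier system, not a weight-$0$ twist), that $y^{r/2}|\eta|^{2r}$ furnishes the unique exceptional eigenvalue with $s_0=1-r/2$, and that no other eigenvalue enters the region $\Reel(s)>3/4$ uniformly for $|t|\leq\pi/12$ --- is asserted rather than established. Since everything else in your argument is standard Tauberian machinery, this unproved block is the whole content of the theorem; without it (or without simply citing Sarnak for it, as the paper does) the proof is not complete.
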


\begin{proof}
  Again, up to notational changes, this is given
  by~\cite[(16)]{sarnak} since the quantity $v_r(\gamma)$ there is
  given by
$$
v_r(g)=e^{i\pi r \psi(g)/6}
$$
for $g\in \Pi$ and $r\in\mathbb R$. So the $r$ in loc. cit. is given by
$r=6t/\pi$ to recover our formulation. 
\end{proof}

This is again an example of restricted mod-Cauchy convergence. Again,
we do not know how far the restriction on $t$ is necessary.  One may of
course perform a summation by parts to remove the weight $\log
N(g)=\ell(g)$ from these results, if desired.

\end{example}

\end{document}